\documentclass[12pt]{article}

\usepackage[margin=1in]{geometry}
\usepackage{amsmath, amsthm, amssymb}
\usepackage{microtype}
\usepackage{hyperref}

\newtheorem{theorem}{Theorem}
\newtheorem{lemma}[theorem]{Lemma}
\newtheorem{proposition}[theorem]{Proposition}
\newtheorem{corollary}[theorem]{Corollary}
\theoremstyle{definition}

\newtheorem{example}[theorem]{Example}
\theoremstyle{remark}
\newtheorem{remark}[theorem]{Remark}

\title{A Simple Trigonometric Classification of Quintic Roots}

\author{Sawon Pratiher\footnote{Indian Institute of Technology (IIT) Kharagpur, West Bengal 721302, India. This work is the outcome of the culmination of research started since the author was first introduced to the algebra coursework in school. (E-mail: \texttt{sawon1234@gmail.com})}}

\date{March 19, 2022}

\begin{document}

\maketitle

\begin{abstract}
This article provides a simple trigonometric method for determining how many roots of a quintic equation are real and how many are complex, without solving the equation. The approach transforms a depressed quintic $t^5 + mt^3 + nt^2 + pt + q = 0$ with $m < 0$ into the trigonometric equation $f(\theta) = \alpha\cos^2\!\theta + \beta\cos\theta + \cos 5\theta + \gamma = 0$ via the Chebyshev identity $16\cos^5\!\theta - 20\cos^3\!\theta + 5\cos\theta = \cos 5\theta$. The derivation is computationally light and conceptually natural, extending the quartic case to fifth-degree equations. As the Abel--Ruffini theorem forbids a general algebraic solution for the quintic, having a simple trigonometric criterion for the nature of its roots is especially appealing.
\end{abstract}

\section{Introduction}

The quintic equation occupies a unique place in the history of mathematics. It was the first degree for which the impossibility of a general algebraic solution was established---by Abel~\cite{Abel1824} and later clarified by Galois---marking a turning point that gave birth to modern algebra. Yet the \emph{nature} of a quintic's roots---how many are real---is a far simpler question than finding the roots themselves, and one might hope for a correspondingly simpler answer.

For the quadratic, the discriminant $b^2 - 4ac$ settles the question instantly. Loh~\cite{Loh2019} recently showed that even the quadratic formula itself admits a simple and natural proof that might have been discovered millennia earlier. For the quartic, the same trigonometric idea shows that a single substitution converts the root-classification problem into the analysis of $a\cos\theta + \cos 4\theta + b = 0$, replacing the quartic's unwieldy discriminant with an elementary exercise.

This article extends the same idea to the quintic. The Chebyshev polynomial $T_5(x) = 16x^5 - 20x^3 + 5x$ satisfies $T_5(\cos\theta) = \cos 5\theta$, and this identity plays the same structural role for the quintic as the identity $8\cos^4\!\theta - 8\cos^2\!\theta + 1 = \cos 4\theta$ played for the quartic. Matching coefficients via the substitution $t = u\cos\theta$ absorbs the leading and cubic terms of the depressed quintic, leaving a residual equation in $\cos^2\!\theta$, $\cos\theta$, $\cos 5\theta$, and a constant. Counting the zeros of this function on $[0,\pi]$, supplemented by a boundary-sign analysis for roots outside the substitution interval, gives a complete classification into one, three, or five real roots.

The derivation uses nothing beyond the Chebyshev identity, the intermediate value theorem, and the positivity of the third derivative outside a bounded interval. Every step is a routine computation. The reader is encouraged to remember what it was like to first encounter polynomial equations, and to appreciate how the bounded, oscillatory nature of the cosine function makes the root structure of the quintic geometrically transparent.

Throughout this article, all polynomials have real coefficients, so complex roots come in conjugate pairs. A quintic with real coefficients and positive leading coefficient satisfies $P(t) \to +\infty$ as $t \to +\infty$ and $P(t) \to -\infty$ as $t \to -\infty$, guaranteeing at least one real root. The quintic therefore has exactly one, three, or five real roots.

\section{Derivation}

\subsection{The depressed quintic}

Every monic quintic $z^5 + a_4 z^4 + a_3 z^3 + a_2 z^2 + a_1 z + a_0 = 0$ can be converted to \emph{depressed} form---with no $z^4$ term---by the Tschirnhaus substitution $t = z - a_4/5$. This is a standard change of variables (see, e.g., \cite{King2009}), and the result is an equation of the form
\begin{equation}\label{eq:depressed}
P(t) = t^5 + mt^3 + nt^2 + pt + q = 0.
\end{equation}
By Vieta's relations~\cite{Viete1646}, the five roots satisfy $t_1 + t_2 + t_3 + t_4 + t_5 = 0$, so the centroid of the five roots sits at the origin. Knowing any four roots determines the fifth.

\subsection{The Chebyshev identity}

The identity at the heart of our method is
\begin{equation}\label{eq:cheb5}
16\cos^5\theta - 20\cos^3\theta + 5\cos\theta = \cos 5\theta,
\end{equation}
valid for all $\theta \in \mathbb{R}$. This is nothing other than the statement that the degree-five Chebyshev polynomial of the first kind, $T_5(x) = 16x^5 - 20x^3 + 5x$, satisfies $T_5(\cos\theta) = \cos 5\theta$. The Chebyshev polynomials have a long and distinguished history, originating in Chebyshev's work on approximation theory in the 1850s (see, e.g., \cite{Mason2003}), and the multi-angle identities they encode have been known since Euler's time. What is perhaps less appreciated is how directly they connect to the root structure of polynomial equations.

\subsection{The trigonometric substitution}

Here is the key step. Assume $m < 0$, and set
\begin{equation}\label{eq:u_def}
u = \frac{2\sqrt{-m}}{\sqrt{5}}.
\end{equation}
(The case $m \geq 0$ is discussed in Section~\ref{sec:mgeq0}.) Substituting $t = u\cos\theta$ into (\ref{eq:depressed}) and dividing through by $u^5/16$ gives
\begin{equation}\label{eq:after_sub}
16\cos^5\theta + \frac{16m}{u^2}\cos^3\theta + \frac{16n}{u^3}\cos^2\theta + \frac{16p}{u^4}\cos\theta + \frac{16q}{u^5} = 0.
\end{equation}
The choice of $u$ in (\ref{eq:u_def}) was designed so that $16m/u^2 = 16m/(-4m/5) = -20$, making the $\cos^3\!\theta$ coefficient match the Chebyshev identity (\ref{eq:cheb5}) exactly. After this matching, (\ref{eq:after_sub}) becomes
\begin{equation}\label{eq:matched}
16\cos^5\theta - 20\cos^3\theta + \alpha\cos^2\theta + (\beta + 5)\cos\theta + \gamma = 0,
\end{equation}
where we define the \textbf{trigonometric parameters}:
\begin{equation}\label{eq:params}
\boxed{\alpha = \frac{16n}{u^3}, \qquad \beta = \frac{16p}{u^4} - 5, \qquad \gamma = \frac{16q}{u^5}.}
\end{equation}
Subtracting the Chebyshev identity (\ref{eq:cheb5}) from (\ref{eq:matched})---that is, noting that the terms $16\cos^5\!\theta - 20\cos^3\!\theta + 5\cos\theta$ on the left equal $\cos 5\theta$ by (\ref{eq:cheb5})---we arrive at the \textbf{reduced trigonometric equation}:
\begin{equation}\label{eq:f_def}
\boxed{f(\theta) = \alpha\cos^2\theta + \beta\cos\theta + \cos 5\theta + \gamma = 0.}
\end{equation}

That is the entire reduction. One substitution, one division, and one use of the Chebyshev identity. The quintic has been converted into a trigonometric equation in $\theta$ with just three parameters, $\alpha$, $\beta$, and $\gamma$.

\begin{remark}\label{rem:n_zero}
When $n = 0$ (the depressed quintic has no $t^2$ term), we have $\alpha = 0$ and the reduced equation simplifies to $f(\theta) = \beta\cos\theta + \cos 5\theta + \gamma = 0$, which has exactly the same structural form---a single cosine plus a multi-angle cosine plus a constant---as the quartic's trigonometric equation. The $\cos^2\!\theta$ term is the genuinely new feature of the quintic.
\end{remark}

\subsection{Why this works: the bijection}

Since $\cos\colon [0,\pi] \to [-1,1]$ is a continuous, strictly decreasing bijection, the map $\theta \mapsto u\cos\theta$ is a bijection from $[0,\pi]$ onto $[-u, u]$. A direct computation confirms:

\begin{proposition}\label{prop:fP}
For all $\theta \in \mathbb{R}$,
\begin{equation}\label{eq:fP}
f(\theta) = \frac{16}{u^5}\,P(u\cos\theta).
\end{equation}
\end{proposition}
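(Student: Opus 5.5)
The plan is to expand the right-hand side of (\ref{eq:fP}) directly and then match it term by term against $f(\theta)$, using the Chebyshev identity (\ref{eq:cheb5}) for the three odd-power terms. Write $c=\cos\theta$ and $t=uc$. Then
\[
\frac{16}{u^5}P(uc) = 16c^5 + \frac{16m}{u^2}c^3 + \frac{16n}{u^3}c^2 + \frac{16p}{u^4}c + \frac{16q}{u^5},
\]
which is just (\ref{eq:after_sub}) read as an identity in $c$ rather than as an equation. Since $m<0$, $u>0$ is well defined and $u^5\neq 0$, so this step is legitimate.

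Next I substitute the chosen value of $u$ from (\ref{eq:u_def}). Because $u^2=-4m/5$, we get $16m/u^2=-20$. The remaining coefficients are exactly the trigonometric parameters (\ref{eq:params}): the $c^2$ coefficient is $\alpha$, the $c$ coefficient is $\beta+5$, and the constant is $\gamma$. So the expression equals
\[
(16c^5-20c^3+5c) + \alpha c^2 + \beta c + \gamma,
\]
after splitting $(\beta+5)c$ as $5c+\beta c$.

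Finally, I apply (\ref{eq:cheb5}), which holds for every real $\theta$, to replace the bracket by $\cos 5\theta$. This yields $\alpha\cos^2\theta+\beta\cos\theta+\cos5\theta+\gamma=f(\theta)$ for all $\theta\in\mathbb{R}$.

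There is no real obstacle here; the only points needing care are:
\begin{itemize}
\item noting that $u>0$ (guaranteed by the standing assumption $m<0$), so dividing by $u^5$ is valid;
\item checking the arithmetic $16m/u^2=-20$.
\end{itemize}
If one wanted (\ref{eq:cheb5}) itself justified, the quickest route is the recurrence $T_{k+1}=2xT_k-T_{k-1}$, which follows from $\cos((k+1)\theta)+\cos((k-1)\theta)=2\cos\theta\cos k\theta$, starting from $T_0=1$ and $T_1=x$. The paper, however, takes (\ref{eq:cheb5}) as given.
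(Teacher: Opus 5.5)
Your proposal is correct and follows essentially the same route as the paper: expand $\tfrac{16}{u^5}P(u\cos\theta)$, use $u^2=-4m/5$ to get the coefficient $-20$ on $\cos^3\theta$, read off $\alpha$, $\beta+5$, $\gamma$ from (\ref{eq:params}), and absorb $16\cos^5\theta-20\cos^3\theta+5\cos\theta$ into $\cos 5\theta$ via (\ref{eq:cheb5}). Your added remarks (that $u>0$ because $m<0$, and the recurrence justification of the Chebyshev identity) are accurate extras that the paper leaves implicit.
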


\begin{proof}
Equation~(\ref{eq:matched}) reads $16\cos^5\!\theta - 20\cos^3\!\theta + \alpha\cos^2\!\theta + (\beta+5)\cos\theta + \gamma = 16\,P(u\cos\theta)/u^5$, which holds by the definition of $\alpha$, $\beta$, $\gamma$ in~(\ref{eq:params}). The Chebyshev identity lets us replace the first two terms plus $5\cos\theta$ with $\cos 5\theta$, giving $f(\theta)$ on the left.
\end{proof}

The immediate consequence is:

\begin{corollary}\label{cor:bijection}
The zeros of $f$ in $[0,\pi]$ are in bijection with the roots of $P$ in $[-u,u]$. Explicitly, $\theta_0 \in [0,\pi]$ satisfies $f(\theta_0) = 0$ if and only if $t_0 = u\cos\theta_0$ is a root of $P$.
\end{corollary}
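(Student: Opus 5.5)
The plan is to derive Corollary~\ref{cor:bijection} directly from Proposition~\ref{prop:fP} together with the bijectivity of $\theta \mapsto u\cos\theta$ from $[0,\pi]$ onto $[-u,u]$. No further analysis is needed.

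\emph{Step 1: zeros of $f$ correspond to roots of $P$ pointwise.} Since $m<0$, the quantity $u = 2\sqrt{-m}/\sqrt{5}$ is a strictly positive real number. Hence the factor $16/u^5$ in (\ref{eq:fP}) is nonzero and finite. For any $\theta_0\in\mathbb{R}$, Proposition~\ref{prop:fP} then gives
\[
f(\theta_0)=0 \iff P(u\cos\theta_0)=0 .
\]
Restricting to $\theta_0\in[0,\pi]$, this is exactly the stated ``if and only if'' with $t_0=u\cos\theta_0$.

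\emph{Step 2: the correspondence is a bijection.} Define $\Phi(\theta)=u\cos\theta$ on $[0,\pi]$. Because $\cos$ is continuous and strictly decreasing from $[0,\pi]$ onto $[-1,1]$, and $u>0$, the map $\Phi$ is a strictly decreasing continuous bijection onto $[-u,u]$. Let $Z_f$ be the zero set of $f$ in $[0,\pi]$ and $R_P$ the set of real roots of $P$ in $[-u,u]$.
\begin{itemize}
\item By Step~1, $\Phi$ maps $Z_f$ into $R_P$.
\item Conversely, given $t_0\in R_P$, set $\theta_0=\arccos(t_0/u)\in[0,\pi]$. Then $\Phi(\theta_0)=t_0$, and Step~1 gives $f(\theta_0)=0$, so $\Phi(Z_f)=R_P$.
\item Injectivity of $\Phi|_{Z_f}$ is inherited from injectivity of $\Phi$.
\end{itemize}
Therefore $\Phi|_{Z_f}$ is a bijection $Z_f\to R_P$.

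There is no real obstacle here. The only points to state carefully are that $u>0$, so division by $u^5$ is legitimate and the image interval is $[-u,u]$ rather than a reversed or degenerate one, and that the bijection concerns distinct zeros and roots as sets, not multiplicities. If the later classification needs to count with multiplicity, I would add a remark: $\Phi'(\theta)=-u\sin\theta\neq0$ on $(0,\pi)$, so multiplicities agree at interior points. At the endpoints $t_0=\pm u$, a simple root of $P$ becomes a double zero of $f$, since $f$ is even about $0$ and about $\pi$. For the corollary as stated, however, the set-level bijection suffices.
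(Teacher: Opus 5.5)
Your proposal is correct and follows the paper's route: the paper likewise deduces the corollary immediately from Proposition~\ref{prop:fP}, using $16/u^5\neq 0$, together with the fact that $\theta\mapsto u\cos\theta$ is a bijection from $[0,\pi]$ onto $[-u,u]$. Your side remark on multiplicities is accurate (they agree in the interior, and a simple root of $P$ at $\pm u$ becomes a double zero of $f$ at $0$ or $\pi$), but it is not needed for the set-level statement.
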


Evaluating (\ref{eq:fP}) at the endpoints:
\begin{equation}\label{eq:boundary}
f(0) = \frac{16\,P(u)}{u^5}, \qquad f(\pi) = \frac{16\,P(-u)}{u^5}.
\end{equation}
Since $u^5 > 0$, the sign of $f(0)$ equals the sign of $P(u)$, and the sign of $f(\pi)$ equals the sign of $P(-u)$. In terms of the trigonometric parameters:
\begin{equation}\label{eq:boundary_vals}
f(0) = \alpha + \beta + 1 + \gamma, \qquad f(\pi) = \alpha - \beta - 1 + \gamma.
\end{equation}

\subsection{What about roots outside $[-u,u]$?}

The substitution $t = u\cos\theta$ only captures roots in $[-u,u]$. For the quartic, the exterior analysis was straightforward: $P$ is convex outside $[-u,u]$, so at most one root on each side. For the quintic, the argument is only slightly more involved, and the asymmetric end-behavior ($P(t) \to +\infty$ as $t \to +\infty$, $P(t) \to -\infty$ as $t \to -\infty$) introduces a new twist.

\begin{lemma}[Third-derivative positivity]\label{lem:Pppp}
If $m < 0$, then $P'''(t) > 0$ for all $|t| > u$.
\end{lemma}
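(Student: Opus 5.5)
The plan is to compute $P'''$ explicitly and compare its zero set with the interval $[-u,u]$. Differentiating (\ref{eq:depressed}) three times kills the linear and constant terms. The quadratic term $nt^2$ also disappears after three derivatives, leaving
\[
P'''(t) = 60t^2 + 6m .
\]
So $P'''$ is an even quadratic in $t$ with positive leading coefficient. It is therefore increasing in $|t|$, and the claim reduces to showing that $P'''(u) > 0$.

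Next I would use the definition (\ref{eq:u_def}), which gives $u^2 = 4(-m)/5 = -\tfrac{4m}{5}$. For $|t| > u$ we have $t^2 > u^2$. Hence
\[
P'''(t) > 60u^2 + 6m = -48m + 6m = -42m,
\]
and this is strictly positive because $m<0$ by hypothesis.

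An equivalent viewpoint is that the zeros of $P'''$ are at $t = \pm\sqrt{-m/10}$. Since $-m/10 < -4m/5 = u^2$, both zeros lie strictly inside $(-u,u)$. Outside this interval $P'''$ then keeps the sign of its leading coefficient, which is positive.

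There is no real obstacle here. The only points needing care are that $n$, $p$, $q$ play no role, and that the strict inequality uses $m<0$; this also guarantees $u>0$, so that $[-u,u]$ is a genuine interval. The substantive content is simply the numerical comparison $\tfrac{1}{10} < \tfrac{4}{5}$, which shows that the inflection structure of $P''$ lies well inside the substitution interval.
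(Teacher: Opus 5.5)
Your proposal is correct and follows the paper's proof: compute $P'''(t) = 60t^2 + 6m$, use $t^2 > u^2 = -4m/5$ to get $P'''(t) > -48m + 6m = -42m > 0$. Your alternative remark that the zeros $\pm\sqrt{-m/10}$ of $P'''$ lie strictly inside $(-u,u)$ is an equivalent restatement of the same computation.
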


\begin{proof}
We compute $P'''(t) = 60t^2 + 6m$. For $|t| > u$, we have $t^2 > u^2 = -4m/5$, so $60t^2 > -48m$, and therefore $P'''(t) = 60t^2 + 6m > -48m + 6m = -42m > 0$.
\end{proof}

The positivity of $P'''$ means that $P''$ is strictly increasing on each half-line, which in turn means $P'$ has at most one inflection point there, and ultimately:

\begin{corollary}\label{cor:ext_bound}
The polynomial $P$ has at most three roots on each of the half-lines $(u, \infty)$ and $(-\infty, -u)$. Generically (and in all examples of this article), each half-line contributes at most one root.
\end{corollary}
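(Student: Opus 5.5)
The plan is a Rolle's-theorem cascade starting from Lemma~\ref{lem:Pppp}. By symmetry of the argument I treat the half-line $(u,\infty)$; the case $(-\infty,-u)$ is identical, since Lemma~\ref{lem:Pppp} gives $P'''>0$ there as well and nothing below uses the sign of $t$. The key point is that each of $(u,\infty)$ and $(-\infty,-u)$ is an interval. So between any two zeros of a differentiable function lying in one of them, Rolle's theorem produces a zero of the derivative that also lies in it.

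Suppose, for contradiction, that $P$ has four or more roots in $(u,\infty)$, counted with multiplicity.

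\begin{itemize}
\item First I reduce to distinct roots. A root of multiplicity $k$ of $P$ is a root of multiplicity $k-1$ of $P'$. Combining this with Rolle's theorem applied between consecutive distinct roots gives the standard fact that $P'$ has at least (number of roots of $P$ counted with multiplicity)$\,-1$ roots in the same interval, again counted with multiplicity.
\item Iterating this three times shows that $P'$ has at least three roots in $(u,\infty)$, $P''$ at least two, and $P'''$ at least one.
\item This contradicts Lemma~\ref{lem:Pppp}, which says $P'''(t)>0$ for every $t>u$.
\end{itemize}

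Hence $P$ has at most three roots in $(u,\infty)$, counted with multiplicity.

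An equivalent phrasing follows the paper's own remark after Lemma~\ref{lem:Pppp}, and I would present whichever reads more smoothly. Since $P'''>0$, $P''$ is strictly increasing on the half-line and so vanishes at most once there. Then $P'$ is piecewise monotone with at most two monotone pieces, so it vanishes at most twice. Finally $P$ has at most three monotone pieces, so it vanishes at most three times.

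The only genuine care point is the multiplicity bookkeeping: a double root must not be allowed to slip through as a single sign change. The version of Rolle's theorem counting multiplicities, described in the first bullet, handles this cleanly, so I would state that fact explicitly before running the cascade.

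The second sentence of the corollary, that generically each half-line contributes at most one root, is not a precise mathematical claim, and I would not try to prove it as stated. I would instead record it as a remark. Its content is that in the examples of the paper, direct sign checks of $P(u)$, $P'(u)$ and $P''(u)$ show that $P$, $P'$ and $P''$ are already positive and increasing at $t=u$; together with $P'''>0$ this forces $P$ to be increasing on $(u,\infty)$, giving at most one root there. The analogous check at $-u$ handles the left half-line. The whole proof carries no real obstacle beyond the multiplicity bookkeeping.
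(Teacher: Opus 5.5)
Your proof is correct and is essentially the paper's own argument. The paper likewise reads the bound off Lemma~\ref{lem:Pppp} via the cascade ``$P'''>0$, so $P''$ is strictly increasing, so $P'$ has at most two zeros and $P$ at most three on each half-line,'' and your multiplicity-counting Rolle version is a more careful rendering of that sketch.

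You are right not to prove the ``generically'' clause; it actually fails on an open set. For instance, $P(t)=(t-1.05)(t-1.1)(t-1.15)\bigl((t+1.65)^2+3.29\bigr)$ has $m=-5/4$, $u=1$ and three roots in $(1,\infty)$. However, your closing remark misstates the examples: in Example~\ref{ex:three} one has $P(u)=P(2)=-11<0$. What is actually needed is only $P'(u)\ge 0$ and $P''(u)\ge 0$ on the right, and $P'(-u)\ge 0$ and $P''(-u)\le 0$ on the left.
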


The crucial difference from the quartic is the asymmetric end-behavior. Since $P(t) \to +\infty$ as $t \to +\infty$:
\begin{itemize}
\item On $(u, \infty)$: there is at least one root if and only if $P(u) < 0$, i.e., $f(0) < 0$.
\item On $(-\infty, -u)$: since $P(t) \to -\infty$ as $t \to -\infty$, there is at least one root if and only if $P(-u) > 0$, i.e., $f(\pi) > 0$.
\end{itemize}

Notice the asymmetry: on the right side, an exterior root requires $f(0) < 0$ (the polynomial dips below zero before climbing to $+\infty$); on the left side, it requires $f(\pi) > 0$ (the polynomial rises above zero before plunging to $-\infty$). This is the parity flip that distinguishes odd-degree from even-degree polynomials.

\begin{proposition}[Exterior root detection]\label{prop:exterior}
In the generic case (at most one root per half-line), the exterior root count is
\begin{equation}\label{eq:Next}
N_{\mathrm{ext}} = N_{\mathrm{ext}}^{+} + N_{\mathrm{ext}}^{-}, \quad\text{where}\quad N_{\mathrm{ext}}^{+} = \mathbf{1}_{f(0)<0}, \quad N_{\mathrm{ext}}^{-} = \mathbf{1}_{f(\pi)>0}.
\end{equation}
\end{proposition}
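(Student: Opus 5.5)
The plan is to treat the two half-lines separately, using only the intermediate value theorem and the end behaviour of $P$. I will read \emph{generic} to mean three things: each half-line $(u,\infty)$ and $(-\infty,-u)$ carries at most one root of $P$, counted with multiplicity, so any exterior root is simple; and $P(\pm u)\neq 0$, so $f(0)\neq 0$ and $f(\pi)\neq 0$ by (\ref{eq:boundary}). The last condition really is needed, not just convenient. If $P(u)=0$ while $P<0$ on $(u,r)$ for some simple root $r>u$, then there is an exterior root but $\mathbf{1}_{f(0)<0}=0$. So I would make this hypothesis explicit at the start of the proof.

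\emph{Right half-line.} Suppose first that $f(0)<0$. Then $P(u)<0$ by (\ref{eq:boundary}). Since $P(t)\to+\infty$ as $t\to+\infty$, there is some $T>u$ with $P(T)>0$, and the intermediate value theorem gives a root in $(u,T)$. By the genericity hypothesis this root is the only one on $(u,\infty)$, so $N_{\mathrm{ext}}^{+}=1$.

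Conversely, suppose $P$ has a root $r>u$, which is simple and unique on $(u,\infty)$. Then $P$ has no zeros on $(u,r)$ and changes sign at $r$. Because $P>0$ for large $t$ and there are no further zeros beyond $r$, we get $P>0$ on $(r,\infty)$, hence $P<0$ on $(u,r)$. By continuity $P(u)\le 0$, and since $P(u)\neq 0$ we get $P(u)<0$, i.e.\ $f(0)<0$. Hence $N_{\mathrm{ext}}^{+}=\mathbf{1}_{f(0)<0}$.

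\emph{Left half-line.} The argument is the mirror image, with the sign flip coming from $P(t)\to-\infty$ as $t\to-\infty$. If $f(\pi)>0$, then $P(-u)>0$ while $P(T)<0$ for some $T<-u$, and the intermediate value theorem produces a root in $(T,-u)$. It is unique by hypothesis, so $N_{\mathrm{ext}}^{-}=1$.

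Conversely, a unique simple root $s<-u$ forces $P<0$ on $(-\infty,s)$, because $P\to-\infty$ there and has no other zeros. Since $P$ changes sign at $s$, it follows that $P>0$ on $(s,-u)$, so $P(-u)\ge 0$. Excluding equality gives $P(-u)>0$, i.e.\ $f(\pi)>0$. Hence $N_{\mathrm{ext}}^{-}=\mathbf{1}_{f(\pi)>0}$.

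Adding the two counts gives (\ref{eq:Next}). Corollary~\ref{cor:ext_bound} guarantees that the count on each half-line is finite (at most three), so the genericity hypothesis only sharpens a bound we already have.

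The main obstacle is not the intermediate value theorem steps but pinning down what \emph{generic} has to mean for the indicator formula to be exactly right. A double root or a root exactly at $\pm u$ breaks the ``iff'' statements in the bullet list preceding the proposition. I would therefore state the simplicity and $P(\pm u)\neq 0$ conditions as part of the hypothesis, and remark that both fail only on a measure-zero set of coefficients: the vanishing of the discriminant, or of $\alpha\pm(\beta+1)+\gamma$.
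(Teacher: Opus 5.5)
Your proof is correct and uses the same argument as the paper, namely the bullet list just before the proposition: the intermediate value theorem, the opposite end behaviour of $P$ on the two half-lines, and the sign identities $f(0)=16P(u)/u^5$, $f(\pi)=16P(-u)/u^5$. Your strengthening of ``generic'' to simple exterior roots and $P(\pm u)\neq 0$ is exactly what the ``only if'' direction needs and the paper omits; for instance, $P(t)=t(t-2)(t-3)(t^2+5t+14)=t^5-5t^3-40t^2+84t$ has $u=2$ and $f(0)=0$, so $\mathbf{1}_{f(0)<0}=0$, yet it has exactly one root on $(2,\infty)$, at $t=3$.
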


\section{Critical-point analysis}

To count zeros of $f$ on $[0,\pi]$, we study its critical points. Differentiating (\ref{eq:f_def}):
\begin{equation}\label{eq:fprime}
f'(\theta) = -2\alpha\cos\theta\sin\theta - \beta\sin\theta - 5\sin 5\theta.
\end{equation}
Using $\sin 5\theta = \sin\theta \cdot U_4(\cos\theta)$, where $U_4(x) = 16x^4 - 12x^2 + 1$ is the degree-four Chebyshev polynomial of the second kind~\cite{Mason2003}, and noting that $\sin\theta > 0$ on $(0,\pi)$, the interior critical points of $f$ satisfy the \textbf{critical-point equation}:
\begin{equation}\label{eq:gx}
g(x) := 80x^4 - 60x^2 + 2\alpha x + (\beta + 5) = 0, \quad x = \cos\theta \in (-1,1).
\end{equation}

\begin{proposition}\label{prop:crit_count}
The function $f$ has at most four critical points in the open interval $(0,\pi)$. Consequently, $f$ has at most five zeros in $[0,\pi]$---consistent with the quintic having at most five real roots.
\end{proposition}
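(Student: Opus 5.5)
The plan is to factor $f'$ and use the bijection $\theta\mapsto\cos\theta$ to turn the count of interior critical points into a root count for the quartic $g$ of (\ref{eq:gx}). Then Rolle's theorem converts that bound into a bound on zeros.

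\textbf{Step 1: factor the derivative.} Substituting $\sin 5\theta = \sin\theta\,U_4(\cos\theta)$ into (\ref{eq:fprime}) gives
\[
f'(\theta) = -\sin\theta\,\bigl(2\alpha\cos\theta + \beta + 5U_4(\cos\theta)\bigr) = -\sin\theta\,g(\cos\theta).
\]
Expanding $5U_4(x) = 80x^4 - 60x^2 + 5$ confirms that the bracket is exactly $g(x)$. On $(0,\pi)$ we have $\sin\theta>0$, so $f'(\theta)=0$ if and only if $g(\cos\theta)=0$.

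\textbf{Step 2: count critical points.} Since $\cos$ is a bijection from $(0,\pi)$ onto $(-1,1)$, the interior critical points of $f$ correspond one-to-one with the roots of $g$ in $(-1,1)$. Now $g$ is a nonzero polynomial of degree four, with leading coefficient $80$, so it has at most four real roots. Hence $f$ has at most four critical points in $(0,\pi)$.

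\textbf{Step 3: bound the zeros.} Suppose $f$ had six or more zeros $\theta_1<\dots<\theta_6$ in $[0,\pi]$. Rolle's theorem applied on each $[\theta_i,\theta_{i+1}]$ gives at least five distinct points of $(0,\pi)$ where $f'$ vanishes. This contradicts Step 2, so $f$ has at most five zeros in $[0,\pi]$.

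\textbf{Consistency check.} By Corollary~\ref{cor:bijection}, the same bound also follows directly: zeros of $f$ in $[0,\pi]$ correspond to roots of $P$ in $[-u,u]$, and a quintic has at most five roots. The critical-point argument agrees with this.

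\textbf{Where care is needed.} The Rolle step counts distinct zeros, so multiplicity does not enter. The only real care is Step 1, namely the identity $\sin 5\theta=\sin\theta\,U_4(\cos\theta)$ with the stated $U_4$. I would check it by writing $\sin5\theta=\operatorname{Im}(\cos\theta+i\sin\theta)^5$ and replacing $\sin^2\theta$ by $1-\cos^2\theta$ throughout. This is routine, so I expect no substantive obstacle.
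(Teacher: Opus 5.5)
Your proof is correct and follows the same route as the paper. You factor $f'(\theta) = -\sin\theta\,g(\cos\theta)$, use the bijection $\cos\colon(0,\pi)\to(-1,1)$, and apply the degree bound on the quartic $g$; your Rolle's-theorem step is just the contrapositive form of the paper's ``at most five monotone segments'' argument, written out a bit more carefully.
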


\begin{proof}
Equation (\ref{eq:gx}) is a quartic in $x$, so it has at most four real roots in $(-1,1)$; each gives a unique $\theta = \arccos x \in (0,\pi)$. With at most four interior critical points, $f$ has at most five monotone segments, hence at most five zeros.
\end{proof}

\begin{remark}\label{rem:alpha_zero}
When $\alpha = 0$ (i.e., $n = 0$), the critical-point equation reduces to the biquadratic $80x^4 - 60x^2 + (\beta + 5) = 0$, which can be solved in closed form:
\[
x^2 = \frac{3 \pm \sqrt{9 - (\beta+5)}}{8}.
\]
This gives explicit critical points of $f$ in terms of $\beta$ alone, making the zero-counting entirely elementary.
\end{remark}

A useful global bound is:

\begin{lemma}\label{lem:fbound}
For all $\theta$, $\;|f(\theta) - \gamma| \leq |\alpha| + |\beta| + 1$.
\end{lemma}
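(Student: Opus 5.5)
The plan is to subtract $\gamma$ from both sides of the definition (\ref{eq:f_def}) and bound the three remaining terms one at a time with the triangle inequality. For every real $\theta$,
\[
f(\theta) - \gamma = \alpha\cos^2\theta + \beta\cos\theta + \cos 5\theta,
\]
and therefore
\[
|f(\theta) - \gamma| \le |\alpha|\,|\cos\theta|^2 + |\beta|\,|\cos\theta| + |\cos 5\theta|.
\]

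Each term is then controlled by the boundedness of cosine. Since $|\cos\phi| \le 1$ for every real $\phi$, we get $|\cos\theta|^2 \le 1$ and $|\cos\theta|\le 1$, and, applying the same fact with $\phi = 5\theta$, also $|\cos 5\theta| \le 1$. Substituting these three bounds gives $|f(\theta)-\gamma| \le |\alpha| + |\beta| + 1$, which is the claim. Because nothing beyond $|\cos\phi|\le 1$ is used, the bound holds for all $\theta\in\mathbb{R}$ and not merely on $[0,\pi]$.

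There is no real obstacle here; the only point to keep in mind is that the bound is not sharp in general, since the three terms cannot always reach their maxima with aligned signs at the same $\theta$. That does not matter for the statement.

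It is worth recording the consequence this lemma is presumably meant for. If $|\gamma| > |\alpha| + |\beta| + 1$, then $f$ has constant sign (the sign of $\gamma$) and so has no zeros. By Corollary~\ref{cor:bijection}, $P$ then has no roots in $[-u,u]$, and the root count reduces to the exterior analysis of Proposition~\ref{prop:exterior}.
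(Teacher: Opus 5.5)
Your proof is correct and is the same argument the paper gives: write $f(\theta)-\gamma = \alpha\cos^2\theta + \beta\cos\theta + \cos 5\theta$, apply the triangle inequality, and bound each cosine factor by $1$. Your closing remark is also valid but not needed for the lemma: if $|\gamma| > |\alpha|+|\beta|+1$, then $f$ has the sign of $\gamma$ everywhere and so has no zeros in $[0,\pi]$.
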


\begin{proof}
$|\alpha\cos^2\!\theta + \beta\cos\theta + \cos 5\theta| \leq |\alpha| \cdot 1 + |\beta| \cdot 1 + 1$.
\end{proof}

\section{Classification of the roots}

Let $N_{\mathrm{int}}$ denote the number of zeros of $f$ in $[0,\pi]$ and $N_{\mathrm{ext}}$ the number of roots of $P$ outside $[-u,u]$. By Corollary~\ref{cor:bijection} and Proposition~\ref{prop:exterior}:
\begin{equation}\label{eq:Nreal}
N_{\mathrm{real}} = N_{\mathrm{int}} + N_{\mathrm{ext}} \;\in\; \{1, 3, 5\}.
\end{equation}
We now state the three classification theorems.

\begin{theorem}[Five real roots]\label{thm:five}
All five roots of $P(t) = 0$ are real if and only if $N_{\mathrm{int}} + N_{\mathrm{ext}} = 5$. A sufficient condition for all five roots to lie in $[-u,u]$ is $f(0) \geq 0$ and $f(\pi) \leq 0$, together with $f$ having five zeros in $[0,\pi]$. In this case, the roots are
\[
t_k = u\cos\theta_k, \qquad k = 1, \ldots, 5,
\]
where $0 \leq \theta_1 < \cdots < \theta_5 \leq \pi$ are the five zeros of $f$.
\end{theorem}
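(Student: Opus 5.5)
The first assertion follows by counting. The real roots of $P$ split into those in $[-u,u]$ and those outside, and the two groups are disjoint. By Corollary~\ref{cor:bijection}, the roots in $[-u,u]$ correspond one-to-one with the zeros of $f$ in $[0,\pi]$, so their number is $N_{\mathrm{int}}$. By definition, the roots outside number $N_{\mathrm{ext}}$. Hence the number of distinct real roots is $N_{\mathrm{int}}+N_{\mathrm{ext}}$. A quintic has at most five roots, so all five roots are real (and distinct) exactly when this sum equals $5$.

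The subtlety is multiplicity. If $P$ has a repeated real root, $f$ has a double zero, and "five real roots" counted with multiplicity need not give five distinct zeros. I would therefore read the statement with zeros counted with multiplicity. Under the identity $f(\theta)=\tfrac{16}{u^5}P(u\cos\theta)$ from Proposition~\ref{prop:fP}, a root $t_0\in(-u,u)$ of multiplicity $k$ becomes a zero of $f$ of the same multiplicity, because $\theta\mapsto u\cos\theta$ is a local diffeomorphism on $(0,\pi)$. Alternatively, one adds the standing convention that roots are simple. I would state this convention explicitly and not belabor it.

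For the sufficient condition, suppose $f$ has five zeros $0\le\theta_1<\cdots<\theta_5\le\pi$. Corollary~\ref{cor:bijection} turns these into five distinct roots $t_k=u\cos\theta_k\in[-u,u]$. These are distinct because $\cos$ is injective on $[0,\pi]$. Since $\deg P=5$, they are all the roots of $P$, so
\[
P(t)=\prod_{k=1}^{5}(t-t_k).
\]
This already shows all roots are real and lie in $[-u,u]$, with the stated formula. The sign hypotheses $f(0)\ge0$ and $f(\pi)\le0$ then hold automatically and serve only as a consistency check. Indeed, $P(u)=\prod(u-t_k)\ge0$ because every factor is nonnegative, and $P(-u)=\prod(-u-t_k)\le0$ because it is a product of five nonpositive factors. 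Via \eqref{eq:boundary}, these give $f(0)\ge0$ and $f(\pi)\le0$. These signs also agree with Proposition~\ref{prop:exterior}, which gives $N_{\mathrm{ext}}=0$. I would avoid relying on that proposition, since it is stated only in the generic case; the degree count makes it unnecessary.

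The main obstacle is not any computation but making the statement precise in the degenerate situations. One is multiplicity, handled as above. The other is a root exactly at $\pm u$, which corresponds to $\theta=0$ or $\pi$ and to $f(0)=0$ or $f(\pi)=0$. This root is counted in $N_{\mathrm{int}}$ and not in $N_{\mathrm{ext}}$, since the exterior was defined as the open half-lines. With that bookkeeping fixed, both parts reduce to Corollary~\ref{cor:bijection} plus the bound of five on the number of roots.
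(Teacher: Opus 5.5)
Your proof is correct, but for the sufficiency claim it takes a different route from the paper's. The paper argues from the boundary signs. From $f(0)\ge 0$ and $f(\pi)\le 0$ it uses Proposition~\ref{prop:exterior} to infer that each exterior half-line carries an even number of roots. Under the generic assumption of at most one root per half-line, that number is zero, and the paper then declares all five roots interior. It does not separately argue the first ``if and only if''.

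You instead use only Corollary~\ref{cor:bijection} and the degree bound. Five distinct zeros of $f$ give five distinct roots $u\cos\theta_k\in[-u,u]$, and these must be all the roots of $P$. This gains two things over the paper's route. First, no genericity hypothesis is needed. Second, the factorization $P(t)=\prod(t-t_k)$ shows that $f(0)\ge0$ and $f(\pi)\le0$ follow from the other hypothesis rather than being needed. So the ``sufficient condition'' is really just ``$f$ has five zeros in $[0,\pi]$''. The paper's route does exercise the boundary-sign mechanism that matters in Theorems~\ref{thm:three} and~\ref{thm:one}, but for this statement it is weaker and unnecessary.

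Your multiplicity remark is a real correction, not pedantry. With zeros counted as distinct, the first equivalence is false. For example, $P(t)=t(t^2-1)^2=t^5-2t^3+t$ has $m=-2<0$ and all roots real, yet $N_{\mathrm{int}}+N_{\mathrm{ext}}=3$.

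One refinement is needed if you adopt the with-multiplicity reading. At $\theta=0$ or $\pi$, the order of vanishing of $f$ is twice the multiplicity of the root $\pm u$ of $P$, because $\cos\theta-1\sim-\theta^2/2$ and $f$ is even about both endpoints. So at the endpoints you must carry the multiplicity over from $P$ rather than read it off $f$. The simple-roots convention avoids this issue entirely.
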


\begin{proof}
By Proposition~\ref{prop:exterior}, $f(0) \geq 0$ means $P(u) \geq 0$, so $N_{\mathrm{ext}}^{+}$ is even (hence zero, generically). Similarly, $f(\pi) \leq 0$ means $P(-u) \leq 0$, so $N_{\mathrm{ext}}^{-}$ is even (hence zero). All five roots must therefore be interior.
\end{proof}

\begin{theorem}[Three real roots]\label{thm:three}
The quintic $P(t) = 0$ has exactly three real roots (and one conjugate pair of complex roots) if and only if $N_{\mathrm{int}} + N_{\mathrm{ext}} = 3$. The generic scenarios are:
\begin{itemize}
\item[(a)] $N_{\mathrm{int}} = 3$ and $N_{\mathrm{ext}} = 0$: all three real roots lie in $[-u,u]$. This requires $f(0) \geq 0$, $f(\pi) \leq 0$, and exactly three zeros of $f$ in $[0,\pi]$.
\item[(b)] $N_{\mathrm{int}} = 1$ and $N_{\mathrm{ext}} = 2$: one interior root and one exterior root on each side. This occurs when $f(0) < 0$ and $f(\pi) > 0$.
\item[(c)] $N_{\mathrm{int}} = 2$ and $N_{\mathrm{ext}} = 1$: two interior roots and one exterior root. This occurs when exactly one of $f(0) < 0$ or $f(\pi) > 0$ holds.
\end{itemize}
\end{theorem}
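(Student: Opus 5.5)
The first assertion is bookkeeping. By (\ref{eq:Nreal}), together with Corollary~\ref{cor:bijection} and the exterior count, $N_{\mathrm{int}}+N_{\mathrm{ext}}$ is exactly the number of real roots of $P$. Since $P$ has real coefficients, non-real roots come in conjugate pairs. So three real roots is equivalent to exactly one conjugate pair among the remaining two roots, which is equivalent to $N_{\mathrm{int}}+N_{\mathrm{ext}}=3$. Here $N_{\mathrm{int}}$ counts roots in the closed interval $[-u,u]$ and $N_{\mathrm{ext}}$ counts the rest, so no root is counted twice. That gives the ``if and only if.''

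For the scenarios I work in the generic regime of Proposition~\ref{prop:exterior}, where each half-line carries at most one root. Then $N_{\mathrm{ext}}=N^+_{\mathrm{ext}}+N^-_{\mathrm{ext}}\in\{0,1,2\}$, and $N_{\mathrm{int}}=3-N_{\mathrm{ext}}$ forces exactly the three splits $(3,0)$, $(2,1)$, $(1,2)$. These are listed as (a), (c), (b). The sign conditions are then read off from Proposition~\ref{prop:exterior}.

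\textbf{Case (a).} $N_{\mathrm{ext}}=0$ requires $\mathbf 1_{f(0)<0}=0$ and $\mathbf 1_{f(\pi)>0}=0$, that is, $f(0)\ge0$ and $f(\pi)\le0$, with $N_{\mathrm{int}}=3$ zeros of $f$ on $[0,\pi]$.

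\textbf{Case (b).} $N_{\mathrm{ext}}=2$ forces both indicators to equal $1$, that is, $f(0)<0$ and $f(\pi)>0$. As a consistency check, $f(\pi)>0>f(0)$ with $f$ continuous gives at least one interior zero by the intermediate value theorem, in agreement with $N_{\mathrm{int}}=1$.

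\textbf{Case (c).} Exactly one indicator equals $1$. Here $f(0)$ and $f(\pi)$ have the same sign (either $f(0)<0$, $f(\pi)\le 0$ or $f(0)\ge 0$, $f(\pi)>0$), which is consistent with an even number of interior zeros, namely two.

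Justifying the exterior count uses Lemma~\ref{lem:Pppp} together with end behaviour. On $(u,\infty)$, $P\to+\infty$, so if $P(u)<0$ there is an odd number of roots counted with multiplicity. If $P(u)\ge0$ the number is even. Corollary~\ref{cor:ext_bound} caps it at three, and genericity reduces it to at most one. The left half-line is handled the same way with the signs flipped.

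The main obstacle is the non-generic situation. A half-line could carry three roots, for example when $P(u)<0$ but $P$ oscillates beyond $u$. The cases could also become ambiguous when $f(0)=0$ or $f(\pi)=0$, since the root then sits at $\pm u$ and is counted as interior, and the parity heuristic must then be applied with care. I would state explicitly that (a)--(c) are the generic scenarios, as the theorem says, and prove them under the hypothesis of Proposition~\ref{prop:exterior}. I would add a remark that boundary zeros are counted in $N_{\mathrm{int}}$, and that in that event the strict inequalities in (b) and (c) are what keep the scenarios disjoint.
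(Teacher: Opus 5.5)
Your bookkeeping follows the paper's own route. The paper gives no separate proof of this theorem, and its argument for Theorem~\ref{thm:five} reads $N_{\mathrm{ext}}$ off Proposition~\ref{prop:exterior} plus parity, exactly as you do. If you assume explicitly that $P$ has at most one root on each of $(u,\infty)$ and $(-\infty,-u)$, your case split is correct.

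The gap is the claim that ``genericity reduces it to at most one.'' That hypothesis fails on an open set of quintics, so genericity cannot discharge it. Take
\[
P(t)=(t-1)(t-2)(t+3)(t^2+6)=t^5-t^3+6t^2-42t+36 .
\]
Here $m=-1$, $u=2/\sqrt5\approx0.894$, and the real roots are $-3,1,2$. So $N_{\mathrm{int}}=0$ and $N_{\mathrm{ext}}=3$, with two roots on $(u,\infty)$ even though $P(u)>0$. Since also $P(-u)>0$, exactly one of $f(0)<0$, $f(\pi)>0$ holds, yet $N_{\mathrm{int}}\neq2$. The roots are simple and away from $\pm u$, so every nearby quintic behaves the same way.

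You flagged three roots after $P(u)<0$ as the non-generic danger. That case can occur, but it is not non-generic, and two roots with $P(u)>0$ already break your ``even, hence zero'' step. Consequently the ``occurs when'' in (c) is false without the half-line condition, which must be stated as a hypothesis rather than called generic.

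Two parts survive without that condition, and you almost have both arguments:
\begin{itemize}
\item \emph{Necessity in (a).} No root on $(u,\infty)$ forces $P(u)\ge0$, and symmetrically $P(-u)\le0$.
\item \emph{Sufficiency in (b).} Suppose there are three real roots and $f(0)<0<f(\pi)$. End behaviour gives an odd, hence positive, number of roots on each half-line. The intermediate value theorem gives a root in $(-u,u)$. A total of three then forces the split $1+1+1$; this is your ``consistency check'' upgraded to a proof.
\end{itemize}

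Your first paragraph also needs all roots to be simple. $N_{\mathrm{int}}$ counts zeros of $f$, that is, distinct roots, whereas ``three real roots and one conjugate pair'' counts with multiplicity. Your phrase ``the remaining two roots'' silently switches between the two. For $P(t)=t(t^2-1)^2=t^5-2t^3+t$ (with $m=-2$, $u\approx1.265$) you get $N_{\mathrm{int}}=3$ and $N_{\mathrm{ext}}=0$, but there is no complex pair.

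Likewise, ``if $P(u)\ge0$ the number is even'' fails at $P(u)=0$: if $P'(u)<0$, the count on $(u,\infty)$ is odd. So the boundary case must be excluded by hypothesis, not just remarked on. Once you add the hypotheses ``all roots simple, none equal to $\pm u$, at most one root per half-line'', your argument is complete.
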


\begin{theorem}[One real root]\label{thm:one}
The quintic $P(t) = 0$ has exactly one real root (and two conjugate pairs of complex roots) if and only if $N_{\mathrm{int}} + N_{\mathrm{ext}} = 1$. The generic scenarios are:
\begin{itemize}
\item[(a)] $f$ has exactly one zero in $[0,\pi]$ and $N_{\mathrm{ext}} = 0$: the unique real root lies in $[-u,u]$.
\item[(b)] $f$ has no zeros in $[0,\pi]$ and $f(\pi) > 0$: the single real root lies in $(-\infty, -u)$.
\item[(c)] $f$ has no zeros in $[0,\pi]$ and $f(0) < 0$: the single real root lies in $(u, \infty)$.
\end{itemize}
In all cases, the guaranteed existence of at least one real root follows from the odd degree of the polynomial.
\end{theorem}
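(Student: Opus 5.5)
The plan is to read Theorem~\ref{thm:one} as a bookkeeping consequence of (\ref{eq:Nreal}), Corollary~\ref{cor:bijection}, and Proposition~\ref{prop:exterior}. The ``if and only if'' is immediate. The real roots of $P$ split disjointly into those in $[-u,u]$ and those outside. The first set has cardinality $N_{\mathrm{int}}$ by the bijection $\theta\mapsto u\cos\theta$ of Corollary~\ref{cor:bijection}. The second has cardinality $N_{\mathrm{ext}}$ by definition. So $N_{\mathrm{real}}=1$ exactly when $N_{\mathrm{int}}+N_{\mathrm{ext}}=1$. Real coefficients force the four non-real roots to form two conjugate pairs. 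The final sentence of the theorem follows from the intermediate value theorem together with $P(t)\to\pm\infty$ as $t\to\pm\infty$.

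For the scenarios, I would enumerate the decompositions of $1=N_{\mathrm{int}}+N_{\mathrm{ext}}$ into nonnegative integers, with $N_{\mathrm{ext}}=N_{\mathrm{ext}}^{+}+N_{\mathrm{ext}}^{-}$. If $N_{\mathrm{int}}=1$, then $N_{\mathrm{ext}}=0$, which is case (a). Otherwise $N_{\mathrm{int}}=0$, so $f$ has no zeros on $[0,\pi]$, and exactly one of $N_{\mathrm{ext}}^{\pm}$ equals $1$.

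To identify which side the exterior root lies on, I would argue by sign rather than rely only on the indicator formula (\ref{eq:Next}). Since $f$ is continuous and zero-free on $[0,\pi]$, $f(0)$ and $f(\pi)$ are nonzero and share a sign. By (\ref{eq:boundary}), $P(u)$ and $P(-u)$ then share that sign.
\begin{itemize}
\item If this common sign is positive, then $P(-u)>0$ and $P(t)\to-\infty$ as $t\to-\infty$ give a root in $(-\infty,-u)$. Uniqueness of the real root rules out a root in $(u,\infty)$, so this is case (b), with $f(\pi)>0$.
\item If the sign is negative, then $P(u)<0$ and $P(t)\to+\infty$ give a root in $(u,\infty)$, which is case (c), with $f(0)<0$.
\end{itemize}
This argument needs no genericity assumption in the zero-interior case. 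The same-sign observation shows (b) and (c) are mutually exclusive and exhaustive.

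The one delicate point is case (a): one must check that $N_{\mathrm{ext}}=0$ is consistent with the stated sign conditions. When $f$ has a single simple zero on $[0,\pi]$, its endpoint values satisfy $f(0)\ge 0\ge f(\pi)$. I would verify this directly: if instead $f(0)<0$, the root $t=u$ side would force an additional root in $(u,\infty)$ by the intermediate value theorem, contradicting $N_{\mathrm{real}}=1$.

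Endpoint zeros and tangential (double) zeros are the main obstacle, since they make $N_{\mathrm{int}}$ count with or without multiplicity. I would handle them by stating the theorem for distinct real roots and invoking the ``generic'' qualifier exactly as in Proposition~\ref{prop:exterior} and Corollary~\ref{cor:ext_bound}.
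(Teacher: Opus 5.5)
Your argument is correct. For the ``if and only if'' it is the same bookkeeping the paper relies on: the paper gives no separate proof of this theorem, and rests it on (\ref{eq:Nreal}) via Corollary~\ref{cor:bijection} and Proposition~\ref{prop:exterior}.

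Where you differ is in locating the exterior root in scenarios (b) and (c). The paper implicitly reads these off the indicator formula (\ref{eq:Next}). That formula is only justified under its ``at most one root per half-line'' assumption; Corollary~\ref{cor:ext_bound} by itself gives only at most three. You instead take $N_{\mathrm{ext}}$ as the literal count of exterior roots, so (\ref{eq:Nreal}) is just a disjoint union. You then use three facts: a zero-free $f$ has $f(0)$ and $f(\pi)$ of one common sign, the intermediate value theorem gives an exterior root, and uniqueness of the real root rules out the other side.

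This gives you a genericity-free proof of (b) and (c). As a bonus it shows that (b) and (c) are mutually exclusive and exhaustive when $N_{\mathrm{int}}=0$, which the paper never states. The paper's route is shorter but carries the genericity caveat into every scenario. Your endpoint-sign check in (a) is not required by (a) as stated, but it is correct and does not need the zero to be simple.

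One correction to your last paragraph: restricting to distinct real roots does not rescue the parenthetical ``two conjugate pairs''. Take $P(t)=(t-1)^3(t^2+3t+\tfrac52)=t^5-\tfrac72t^3+\tfrac12t^2+\tfrac92t-\tfrac52$. It has $m<0$, $u=2\sqrt{0.7}\approx 1.67$, and a single distinct real root $t=1\in[-u,u]$. So $N_{\mathrm{int}}+N_{\mathrm{ext}}=1$, yet $P$ has only one conjugate pair.

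Your step ``the four non-real roots form two conjugate pairs'' silently assumes the real root is simple. The hypothesis actually needed is that $P$ have no repeated real root, so that distinct and multiplicity counts agree. This defect is in the statement itself, and the paper shares it.
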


\subsection{A practical decision procedure}

The classification method, when used in practice, consists of these steps:
\begin{enumerate}
\item Compute $u = 2\sqrt{-m/5}$ and the trigonometric parameters $\alpha, \beta, \gamma$ from (\ref{eq:params}).
\item Evaluate $f(0) = \alpha + \beta + 1 + \gamma$ and $f(\pi) = \alpha - \beta - 1 + \gamma$.
\item Determine $N_{\mathrm{ext}}$: generically, $N_{\mathrm{ext}}^{+} = \mathbf{1}_{f(0)<0}$ and $N_{\mathrm{ext}}^{-} = \mathbf{1}_{f(\pi)>0}$.
\item Find the critical points of $f$ in $(0,\pi)$ by solving the quartic (\ref{eq:gx}) for $x \in (-1,1)$, then setting $\theta = \arccos x$.
\item Evaluate $f$ at the critical points and at $\theta = 0, \pi$. Count sign changes to determine $N_{\mathrm{int}}$.
\item Conclude: $N_{\mathrm{real}} = N_{\mathrm{int}} + N_{\mathrm{ext}} \in \{1, 3, 5\}$.
\end{enumerate}

\section{Examples of use as a method}

The computational simplicity of the derivation renders it natural to use as a method: compute $\alpha$, $\beta$, $\gamma$, examine $f(\theta)$, and read off the root structure.

\subsection{A perfect example: $P(t) = t^5 - 5t^3 + 5t$}

Consider first the most elegant case: $m = -5$, $n = 0$, $p = 5$, $q = 0$, giving $u = 2$. The parameters are $\alpha = 0$, $\beta = 16 \cdot 5/16 - 5 = 0$, $\gamma = 0$, and the reduced equation is simply
\begin{equation}\label{eq:cos5}
f(\theta) = \cos 5\theta = 0.
\end{equation}
The zeros in $[0,\pi]$ are $\theta_k = (2k-1)\pi/10$ for $k = 1, 2, 3, 4, 5$, and the five real roots are
\[
t_k = 2\cos\frac{(2k-1)\pi}{10}, \qquad k = 1, \ldots, 5.
\]
Since $P(t) = t(t^4 - 5t^2 + 5)$, the quartic factor has roots $t^2 = (5 \pm \sqrt{5})/2$, giving the five roots $t = 0, \;\pm\sqrt{(5+\sqrt{5})/2} \approx \pm 1.902, \;\pm\sqrt{(5-\sqrt{5})/2} \approx \pm 1.176$. All five lie in $[-2, 2] = [-u, u]$. The boundary values $f(0) = 1 > 0$ and $f(\pi) = -1 < 0$ confirm $N_{\mathrm{ext}} = 0$.

This is the quintic analogue of the biquadratic: when all trigonometric parameters vanish, the roots are uniformly spaced in the $\theta$-domain and correspond to roots of unity under the Chebyshev map.

\subsection{Example with three real roots}

\begin{example}\label{ex:three}
Consider $P(t) = t^5 - 5t^3 + t - 5 = 0$. Here $m = -5$, $n = 0$, $p = 1$, $q = -5$, so $u = 2$. The trigonometric parameters are
\[
\alpha = 0, \qquad \beta = \frac{16}{16} - 5 = -4, \qquad \gamma = \frac{16(-5)}{32} = -\frac{5}{2}.
\]
The reduced equation is $f(\theta) = -4\cos\theta + \cos 5\theta - 5/2 = 0$. The boundary values are $f(0) = -4 + 1 - 5/2 = -11/2 < 0$ and $f(\pi) = 4 - 1 - 5/2 = 1/2 > 0$. Since $f(0) < 0$, there is at least one root in $(2, \infty)$; since $f(\pi) > 0$, there is at least one root in $(-\infty, -2)$. So $N_{\mathrm{ext}} \geq 2$. Numerical analysis reveals exactly one interior zero, giving $N_{\mathrm{int}} = 1$ and $N_{\mathrm{real}} = 3$. The three real roots are $t \approx -2.043, -1.205, 2.286$, and the two complex roots form the conjugate pair $t \approx 0.481 \pm 0.811i$.
\end{example}

\subsection{Example with one real root}

\begin{example}\label{ex:one}
Consider $P(t) = t^5 - 5t^3 + t^2 + 2t + 5 = 0$, which has a nonzero $t^2$ coefficient. Here $m = -5$, $n = 1$, $p = 2$, $q = 5$, so $u = 2$. The parameters are
\[
\alpha = \frac{16}{8} = 2, \qquad \beta = \frac{32}{16} - 5 = -3, \qquad \gamma = \frac{80}{32} = \frac{5}{2}.
\]
The reduced equation is $f(\theta) = 2\cos^2\!\theta - 3\cos\theta + \cos 5\theta + 5/2 = 0$. The boundary values are $f(0) = 2 - 3 + 1 + 5/2 = 5/2 > 0$ and $f(\pi) = 2 + 3 - 1 + 5/2 = 13/2 > 0$. Since $f(0) > 0$, there are no roots in $(2, \infty)$. Since $f(\pi) > 0$, there is at least one root in $(-\infty, -2)$. A numerical check confirms that $f > 0$ on the entire interval $[0,\pi]$, so $N_{\mathrm{int}} = 0$ and $N_{\mathrm{real}} = 1$. The single real root is $t \approx -2.335$, and the four complex roots form two conjugate pairs.

This example illustrates the full generality of the method: the $\cos^2\!\theta$ term (arising from the nonzero $n = 1$) is nontrivial, yet the analysis proceeds identically.
\end{example}

\section{The reduced quintic ($n = 0$)}

When $n = 0$, the depressed quintic has no quadratic term: $P(t) = t^5 + mt^3 + pt + q$. Then $\alpha = 0$ and the trigonometric equation reduces to
\[
f(\theta) = \beta\cos\theta + \cos 5\theta + \gamma = 0,
\]
which is structurally identical to the quartic's trigonometric equation: a tilt ($\beta\cos\theta$), an oscillation ($\cos 5\theta$), and a shift ($\gamma$). The critical-point equation becomes the biquadratic $80x^4 - 60x^2 + (\beta + 5) = 0$, solvable in closed form. This is the simplest and most elegant case of the quintic trigonometric method.

\section{The case $m \geq 0$}\label{sec:mgeq0}

When $m \geq 0$, the substitution $t = u\cos\theta$ with $u = 2\sqrt{-m/5}$ is not available. A hyperbolic substitution $t = v\cosh\phi$ with $v = 2\sqrt{m/5}$ could in principle match the $\cosh^3\!\phi$ coefficient via the identity $16\cosh^5\!\phi - 20\cosh^3\!\phi + 5\cosh\phi = \cosh 5\phi$, but since $\cosh$ is unbounded and monotonically increasing on $[0,\infty)$, the correspondence between zeros and roots is less clean. When $m = 0$, the Chebyshev identity cannot absorb the $t^3$ term (since there is none), and the method does not apply. We leave these extensions to future work.

\section{Comparison with the quartic}

The following table summarizes the parallels and differences between the quartic and quintic trigonometric reductions.

\medskip

\begin{center}
\renewcommand{\arraystretch}{1.3}
\begin{tabular}{lcc}
\hline
\textbf{Feature} & \textbf{Quartic} & \textbf{Quintic} \\
\hline
Chebyshev identity & $\cos 4\theta$ & $\cos 5\theta$ \\
Substitution & $t = \sqrt{-m}\cos\theta$ & $t = \tfrac{2}{\sqrt{5}}\sqrt{-m}\cos\theta$ \\
Requirement & $m < 0$ & $m < 0$ \\
Residual terms & $\cos\theta + \text{const}$ & $\cos^2\!\theta + \cos\theta + \text{const}$ \\
Max interior zeros & 4 & 5 \\
Critical-point eqn. & cubic in $\cos\theta$ & quartic in $\cos\theta$ \\
Exterior analysis & $P'' > 0$ (convexity) & $P''' > 0$ \\
Max exterior roots/side & 1 & 3 (generically 1) \\
End-behavior & $P \to +\infty$ both sides & $P \to -\infty$ left, $+\infty$ right \\
Possible real root counts & 0, 2, or 4 & 1, 3, or 5 \\
\hline
\end{tabular}
\end{center}

\section{Discussion}

\subsection{The general pattern}

The method presented here for the quintic, together with the analogous quartic analysis, reveals a general pattern. For a degree-$d$ polynomial, the Chebyshev identity $T_d(\cos\theta) = \cos d\theta$ can absorb the highest and third-highest terms of the depressed polynomial (after the substitution $t = u\cos\theta$ with appropriately chosen $u$), leaving a residual trigonometric equation involving $\cos^k\!\theta$ for $k = 0, 1, \ldots, d-3$. The complexity of this residual grows with the degree, but the core mechanism remains intact: the bijection between zeros on $[0,\pi]$ and roots in a bounded interval, supplemented by boundary-sign analysis for exterior roots.

\subsection{Why is the quintic special?}

The Abel--Ruffini theorem tells us that no algebraic formula exists for the roots of a general quintic. The Bring--Jerrard normal form reduces the quintic to $t^5 + pt + q = 0$ (see~\cite{King2009}), but even this simplified form has no closed-form solution in radicals. Our trigonometric method does not solve the quintic---it classifies the nature of its roots---but it does so using only the Chebyshev identity and elementary analysis, sidestepping the algebraic impossibility entirely.

There is a pleasing irony here: the very identity ($\cos 5\theta$ in terms of powers of $\cos\theta$) that \emph{could} solve the quintic if only $\cos 5\theta$ were invertible by radicals (which it is not, in general) turns out to be perfectly suited for the simpler task of determining whether the roots are real or complex.

\subsection{Pedagogical value}

In the spirit of Loh~\cite{Loh2019}, who showed that the quadratic formula admits a derivation simple enough for first-time Algebra learners, we hope that this trigonometric perspective may demystify the root structure of higher-degree polynomials. The key insight---that the bounded range of $\cos\theta$ translates polynomial constraints into trigonometric ones---is both elementary and powerful. A student who has seen the Chebyshev identity and the intermediate value theorem has all the tools needed to classify the roots of any quartic or quintic with $m < 0$.

May this encourage the reader to think afresh about old things: the Chebyshev polynomials, which originated in approximation theory, turn out to be an elegant lens for viewing the geometry of polynomial roots.

\end{document}